\documentclass[11pt,a4paper]{amsart}

\usepackage[T1]{fontenc}
\usepackage[utf8]{inputenc}
\usepackage{lmodern}
\usepackage{amsmath,amssymb,amsthm,mathtools,amsaddr}
\usepackage{upgreek}
\usepackage{xcolor}
\usepackage{geometry}
\usepackage{microtype}
\usepackage{enumerate}
\usepackage{hyperref}
\usepackage[nameinlink,noabbrev]{cleveref}

\hypersetup{
  colorlinks=true,
  linkcolor=violet!85!black,
  citecolor=violet!80!black,
  urlcolor=violet!90!black,
  pdftitle={A~Sharp Surface-Area Extension of Vaaler's Theorem},
  pdfauthor={Micha\l{} Zwierzy\'nski},
  pdfkeywords={sections of the~cube, Vaaler's theorem, surface area, skeletal measure, ridge, orthoschemes, Gaussian solid angle}
}
\newtheorem{theorem}{Theorem}[section]
\newtheorem{proposition}[theorem]{Proposition}
\newtheorem{lemma}[theorem]{Lemma}
\newtheorem{corollary}[theorem]{Corollary}
\newtheorem{conjecture}[theorem]{Conjecture}
\theoremstyle{definition}

\newtheorem{question}[theorem]{Question}
\theoremstyle{remark}
\newtheorem{remark}[theorem]{Remark}

\newcommand{\R}{\mathbb{R}}
\newcommand{\B}{\mathbb{B}}
\newcommand{\cH}{\mathcal{H}}
\newcommand{\vol}{\operatorname{vol}}
\newcommand{\dist}{\operatorname{dist}}
\newcommand{\aff}{\operatorname{aff}}
\newcommand{\lin}{\operatorname{lin}}
\newcommand{\conv}{\operatorname{conv}}
\newcommand{\relint}{\operatorname{relint}}
\newcommand{\dd}{\,\mathrm{d}}

\title[A~Sharp Surface-Area Extension of Vaaler's Theorem]{A~Sharp Surface-Area Extension of Vaaler's Theorem}
\author{Micha\l{} Zwierzy\'nski}

\address{Warsaw University of Technology\\
Faculty of Mathematics and Information Science\\
ul. Koszykowa 75\\
00-662 Warsaw, Poland}

\email{Michal.Zwierzynski@pw.edu.pl}
\email{ORCID: 0000-0002-9627-1563}

\subjclass[2020]{52A38, 52B11, 52A40}

\keywords{sections of the~cube, Vaaler's theorem, surface area, skeletal measure, ridge, orthoschemes}

\begin{document}

\begin{abstract}
Karasev proved, in dimensions two and three, a~sharp surface-area counterpart of a~polyhedral extension of Vaaler's theorem \cite[Theorem~1.2]{Karasev}.  We remove the~dimension restriction.  More precisely, if an~$n$-dimensional convex polytope $P$ contains the~origin in its interior and the~affine hull of every nonempty proper face of codimension $k\in\{1,\ldots,n\}$ is at distance at least $\sqrt{k}$ from the~origin, then
$$
  \cH^{n-1}(\partial P)\geqslant n2^n.
$$
Consequently, the~boundary of every $n$-dimensional linear section of the~cube $[-1,1]^N$ has $(n-1)$-dimensional measure at least $n2^n$.  This confirms, in all dimensions, a~conjecture of Grigory M. Ivanov recorded by Karasev \cite[Section~1]{Karasev}.  The~proof combines the~Rogers--Karasev flag decomposition with a~dimension-free Gaussian comparison for orthoschemes.  Its main step is an~ordered-square substitution which converts all face-distance assumptions into a~pointwise domination of a~single positive integral.  We also disprove the~naive extension to all skeletal measures, formulate a~ridge-skeleton conjecture, and establish two partial results toward it.
\end{abstract}

\maketitle

\tableofcontents

\section{Introduction}

\noindent Let $[-1,1]^N$ be the~$N$-dimensional cube and let $L\subset\R^N$ be an~$n$-dimensional linear subspace.  Vaaler's theorem states that
$$
  \vol_n\bigl([-1,1]^N\cap L\bigr)\geqslant 2^n
$$
(see \cite{Vaaler}).  An~exposition that also includes Vaaler's more general product-of-balls formulation is given in \cite[Theorem~1.1 and Corollary~1.1]{Zong}. A~sharp combinatorial analogue was obtained by Ball and Prodromou \cite[Theorem~1]{BallProdromou}: every such section supports a~probability measure whose second-moment operator dominates the~identity on $L$. Karasev recently observed \cite[Section~2]{Karasev} that an~argument of Rogers \cite{Rogers}, originally developed in connection with sphere packings, gives a~short proof of Vaaler's theorem and of the~following more general polyhedral statement \cite[Theorem~1.1]{Karasev}: if $0\in\operatorname{int}P$ and every codimension-$k$ face $F$ of $P$ satisfies
\begin{equation}\label{eq:face-distance}
  \dist(0,\aff F)\geqslant\sqrt{k},
\end{equation}
then $\vol_n(P)\geqslant 2^n$.

Karasev also considered the~corresponding lower bound for surface area.  As recorded in \cite[Section~1]{Karasev}, the~assertion for sections of the~cube was conjectured by Grigory M. Ivanov in a~private communication with Roman Karasev.  Karasev proved the~stronger polyhedral version for $n=2,3$ \cite[Theorem~1.2]{Karasev}.  His final step reduces to monotonicity of a~quotient involving a~spherical simplex \cite[Section~3]{Karasev}. It is obvious on the~circle and is handled there by a~special spherical-triangle calculation in dimension three.  The~purpose of this note is to remove the~restriction on the~dimension.

Our main result is the~following theorem.

\begin{theorem}\label{thm:main}
Let $n\geqslant1$, and let $P\subset\R^n$ be an~$n$-dimensional convex polytope with $0\in\operatorname{int}P$.  Suppose that every nonempty proper face $F$ of codimension $k\in\{1,\ldots,n\}$ satisfies \eqref{eq:face-distance}.  Then
\begin{equation}\label{eq:main}
  \cH^{n-1}(\partial P)\geqslant n2^n.
\end{equation}
The~constant is sharp, as is seen by taking $P=[-1,1]^n$.
\end{theorem}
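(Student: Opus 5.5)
We will follow the Rogers--Karasev flag decomposition. For each complete flag $F_{n-1}\supset F_{n-2}\supset\cdots\supset F_0$ of faces of $P$, with $F_j$ of dimension $j$, let $p_j$ be the orthogonal projection of the origin onto $\aff F_j$. Assume for now that every $p_j$ lies in $\relint F_j$, the generic situation. Then $P$ is the union, up to null sets, of the orthoschemes $S=\conv\{0,p_{n-1},\ldots,p_0\}$. Their facets opposite $0$, namely $\conv\{p_{n-1},\ldots,p_0\}$, tile $\partial P$. Without this assumption, Rogers' argument gives a signed decomposition with the same identities, and we will use that version.

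The edges of $S$ are mutually orthogonal: $p_{j}-p_{j+1}\perp p_{j+1}$ and so on. Set $h_j=|p_j-p_{j+1}|$, with $p_n:=0$. Then $|p_j|^2=\sum_{i\geqslant j}h_i^2$, and the hypothesis \eqref{eq:face-distance} says $\sum_{i\geqslant j}h_i^2\geqslant n-j$ for each $j$. We have $\vol_n(S)=\frac1{n!}\prod h_i$, and the $(n-1)$-area of the base facet is $n\vol_n(S)/h_{n-1}$, because $h_{n-1}=|p_{n-1}|$ is the height of $S$ over its base.

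The aim is a per-orthoscheme inequality of the form $\mathrm{area}(\text{base of }S)\geqslant n2^n\,\gamma(S)$. Here $\gamma(S)$ is the Gaussian measure of the cone $\R_+S$, that is, the solid-angle fraction of $S$ at the origin. Summing over flags gives the theorem, since the cones tile $\R^n$ and the weights sum to $1$. For the cube $[-1,1]^n$ all $h_i=1$, and each of the $2^nn!$ orthoschemes has base area $n/n!$, so equality holds. The normalisation is therefore right.

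To get a dimension-free handle on $\gamma(S)$, we write the cone in the orthonormal frame $e_j=(p_j-p_{j+1})/h_j$. A point $\sum t_je_j$ lies in $\R_+S$ exactly when $t_{n-1}/h_{n-1}\geqslant t_{n-2}/h_{n-2}\geqslant\cdots\geqslant t_0/h_0\geqslant0$. This follows since $p_j=\sum_{i\geqslant j}h_ie_i$. So
\[
\gamma(S)=\mathbb P\bigl(g_{n-1}/h_{n-1}\geqslant\cdots\geqslant g_0/h_0\geqslant0\bigr)
\]
for i.i.d. standard Gaussians $g_j$. The target becomes
\[
\frac{\prod_{i<n-1}h_i}{(n-1)!}\;\geqslant\; n2^n\,\mathbb P(\text{ordered event}).
\]

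The key step, and the main obstacle, is to prove this inequality under the constraints on the partial sums of $h_i^2$. Our plan is an ordered-square substitution. Write the probability as an integral over the ordered region, substitute $g_j=h_js_j$, and compare with the case where all $h_j=1$. The constraint $\sum_{i\geqslant j}h_i^2\geqslant n-j$ should then let us dominate $\exp(-\tfrac12\sum h_j^2s_j^2)$ by $\exp(-\tfrac12\sum s_j^2)$ pointwise on the ordered region $s_{n-1}\geqslant\cdots\geqslant s_0\geqslant0$. The mechanism is Abel summation: $\sum_j h_j^2s_j^2=\sum_j(s_j^2-s_{j-1}^2)\sum_{i\geqslant j}h_i^2$ with nonnegative increments $s_j^2-s_{j-1}^2$, where $s_{-1}:=0$.

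The delicate point is the Jacobian. The substitution produces the factor $\prod_{j}h_j$, including $h_{n-1}$, while the area side lacks $h_{n-1}$. So we expect to integrate out the top variable separately: for fixed lower variables, its Gaussian tail is bounded using only $h_{n-1}^2\geqslant 1$. Alternatively, we will reorganise so that the constraint on the full sum $\sum h_i^2\geqslant n$ absorbs it. Checking that these two pieces combine into exactly the constant $n2^n/n!$ is where we expect the real work.
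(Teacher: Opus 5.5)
Your orthoscheme bookkeeping (the frame, the cone description, the base area, the cube normalisation) is correct. The reduction to orthoschemes, however, has a genuine gap. Requiring every $p_j\in\relint F_j$ is not a generic condition that a perturbation can achieve. In many admissible polytopes the foot of the perpendicular from $0$ to some $\aff F_j$ lies outside $F_j$; suitable obtuse triangles already do this.

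Your fallback does not help. Grant that a signed Rogers decomposition $\mathbf 1_P=\sum_{\mathcal F}\epsilon_{\mathcal F}\mathbf 1_{S_{\mathcal F}}$, with $\epsilon_{\mathcal F}\in\{\pm1\}$, reproduces both summation identities. Even so, a one-sided per-flag bound $\alpha_{\mathcal F}\geqslant n2^n\gamma(S_{\mathcal F})$, where $\alpha_{\mathcal F}$ is the base area, cannot be summed against negative coefficients.

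The missing idea is the Rogers--Karasev contraction (Lemma~\ref{lem:contraction} and Proposition~\ref{prop:local-reduction}). Build the flag simplices $A_{\mathcal F}$ from the \emph{nearest points} $a_j\in F_j$; these always give an unsigned subdivision of $P$ and of $\partial P$. Then show that the affine map $A_{\mathcal F}\to S_{\mathcal F}$ with $a_j\mapsto p_j$ and $0\mapsto 0$ does not increase norms: each step moves a point in the direction $p_j-a_j$ toward a hyperplane through $0$ without crossing it. Hence $\vol_n(A\cap\B^n)/\vol_n(A)\leqslant\vol_n(S\cap\B^n)/\vol_n(S)$. Both simplices have height $|p_{n-1}|$ over their bases, so the ratio of base area to ball sector can only drop from $A$ to $S$. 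Only after this step is an orthoscheme inequality enough. Nondegeneracy ($p_j\neq p_{j+1}$) is arranged in the paper by a small translation and dilation.

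The analytic key step is also left open, and the route you sketch first fails. Abel summation alone gives $\gamma(S)\leqslant\prod_jh_j/(2^nn!)$. That is the volume bound, and it is off by the factor $h_{n-1}\geqslant1$ needed for surface area.

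Suppose you integrate out the top variable and bound the tail $\Psi(h_{n-1}t)$, where $\Psi(t)=\int_t^\infty e^{-u^2/2}\dd u$, merely by $\Psi(t)$. This discards the surplus $h_{n-1}^2-1$, and prefix conditions on $h_{n-2},\ldots,h_0$ alone are not assumed. Concretely, take $n=2$ with $\mathcal I$ as in Lemma~\ref{lem:ordered-square}. You must show $h_1\mathcal I(h_1^2,h_0^2)=\int_0^\infty e^{-h_0^2t^2/2}\Psi(h_1t)\dd t\leqslant\mathcal I(1,1)=\pi/4$. The crude bound replaces this by $\int_0^\infty e^{-h_0^2t^2/2}\Psi(t)\dd t$, which tends to $\int_0^\infty\Psi=1$ as $h_0\to0$ (with $h_1^2\geqslant2$ fixed).

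What is needed is the sharper estimate
\[
  \Psi(\sqrt q\,t)\leqslant e^{-(q-1)t^2/2}\,\Psi(t)\qquad(q\geqslant1,\ t\geqslant0).
\]
It follows from the substitution $u^2=v^2+(q-1)t^2$, since $\dd u=(v/u)\dd v\leqslant\dd v$. This is exactly the paper's change of variables $w_1^2-w_2^2=q_1(z_1^2-z_2^2)$. It absorbs the factor $h_{n-1}$ and transfers $h_{n-1}^2-1$ onto the next coefficient. The new coefficients $1,\ h_{n-1}^2+h_{n-2}^2-1,\ h_{n-3}^2,\ldots$ then still have top-down partial sums at least $1,2,3,\ldots$. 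Only then does your Abel summation give the pointwise domination and the exact constant $n2^n$.
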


Throughout, a~polytope means a~compact convex polyhedron.  All volumes and Hausdorff measures are computed in the~relevant Euclidean affine hull.

The~consequence for cube sections is immediate from the~face geometry of the~cube -- a~detailed deduction is given in Section \ref{sec:global}.

\begin{corollary}\label{cor:cube}
For integers $1\leqslant n\leqslant N$ and every $n$-dimensional linear subspace $L\subset\R^N$,
\begin{equation}\label{eq:cube-section}
  \cH^{n-1}\!\left(\partial_L\bigl([-1,1]^N\cap L\bigr)\right)
  \geqslant n2^n,
\end{equation}
where $\partial_L$ denotes the~boundary relative to $L$.
\end{corollary}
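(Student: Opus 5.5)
The statement to prove is Corollary~\ref{cor:cube}. The plan is to deduce it from Theorem~\ref{thm:main} applied to the polytope $P=[-1,1]^N\cap L$, viewed inside $L\cong\R^n$. There are three things to check: $P$ is an $n$-dimensional polytope in $L$, the origin lies in its relative interior, and every nonempty proper face of $P$ of codimension $k$ in $L$ satisfies \eqref{eq:face-distance} for the norm of $L$.

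The first two points are immediate. $P$ is compact and convex, being cut out in $L$ by the $2N$ inequalities $\pm x_i\leqslant1$. It contains the ball $\{x\in L:\|x\|_\infty<1\}$, which is relatively open in $L$ and contains $0$. Hence $P$ is $n$-dimensional and $0\in\relint P$.

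The main step is the distance condition. Let $F$ be a nonempty proper face of $P$ of codimension $k$ in $L$, and pick $x\in\relint F$. Let $I=\{i: |x_i|=1\}$ be the set of active coordinates, with signs $\varepsilon_i=x_i$.

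\begin{enumerate}[(i)]
\item \emph{Description of $\aff F$.} By the standard description of faces of an intersection of half-spaces, $F$ is the set of points of $P$ satisfying $\varepsilon_ix_i=1$ for all $i\in I$. Since $x$ is a relative interior point of $F$, we get
$$
\aff F=L\cap\{y:\ y_i=\varepsilon_i\ \text{for all } i\in I\}=L\cap A,
$$
where $A=\{y\in\R^N: y_i=\varepsilon_i,\ i\in I\}$. The inclusion $\supseteq$ holds because near $x$ the inactive constraints are slack, so a neighbourhood of $x$ in $L\cap A$ lies in $F$.

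\item \emph{Counting active coordinates.} The affine space $L\cap A$ has dimension $n-k$ and is cut out of $L$ by $|I|$ linear equations, so $|I|\geqslant k$.

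\item \emph{The distance estimate.} For every $y\in\aff F\subset A$,
$$
\|y\|^2\geqslant\sum_{i\in I}y_i^2=|I|\geqslant k .
$$
Hence $\dist(0,\aff F)\geqslant\sqrt{k}$. The Euclidean distance in $L$ is the restriction of the one in $\R^N$, so this is exactly \eqref{eq:face-distance} in $L$.
\end{enumerate}

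With these three facts, Theorem~\ref{thm:main} applies to $P$ after an isometric identification $L\cong\R^n$. Since $\cH^{n-1}$ and relative boundaries are invariant under isometries, this gives \eqref{eq:cube-section}.

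The only step requiring care is~(i): the face may be degenerate in $L$, with more constraints active than its codimension. This is handled by working with a relative interior point and the full active set, together with the inequality $|I|\geqslant k$ from~(ii). No genericity of $L$ is needed.
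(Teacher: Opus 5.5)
Your proposal is correct and follows essentially the same route as the paper: both apply Theorem~\ref{thm:main} to $P=[-1,1]^N\cap L$ after observing that $\aff F$ lies in an affine subspace of $\R^N$ fixing at least $k$ coordinates to $\pm1$, so $\dist(0,\aff F)\geqslant\sqrt{k}$. The only difference is cosmetic. The paper selects $k$ active constraints that are independent on $L$, while you keep the full active set $I$ and use the dimension count $|I|\geqslant k$. You also verify explicitly that $P$ is $n$-dimensional with $0$ in its relative interior, which the paper leaves implicit.
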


We next describe the~proof in some detail.  This also indicates where the~dimension-free ingredient enters.

\begin{enumerate}[(i)]
  \item For every maximal flag of faces of $P$, choose the~points of the~faces nearest to the~origin.  Their convex hull is a~\emph{flag simplex}.  These simplices cover $P$, while their facets opposite the~origin cover $\partial P$.
  \item Replace the~nearest point in each face by the~orthogonal projection of the~origin onto the~affine hull of that face.  The~resulting simplex is an~orthoscheme.  The~Rogers--Karasev contraction argument \cite[Section~2]{Karasev} shows that this replacement can only decrease the~fraction of the~simplex lying in the~unit ball.  Since the~original and the~orthoscheme have the~same altitude over their opposite facets, the~required local surface-to-sector ratio can only decrease under the~replacement.
  \item In orthogonal coordinates, the~vertices of the~orthoscheme are
  $$
    b_k=(\beta_1,\ldots,\beta_k,0,\ldots,0),
  $$
  and \eqref{eq:face-distance} becomes the~family of prefix inequalities $\beta_1^2+\cdots+\beta_k^2\geqslant k$.  The~volume of the~unit-ball sector divided by the~volume of the~unit ball is its Gaussian solid
  angle.  Expressing this angle as an~ordered Gaussian integral, the~dimension-free comparison lemma shows that the~prefix inequalities make the~relevant surface-to-sector ratio no smaller than for the~standard cube orthoscheme.
  \item The~resulting local bound is
  $$
    \frac{\text{area of the~opposite facet}}
         {\text{volume of the~unit-ball sector}}
    \geqslant \frac{n2^n}{\vol_n(\B^n)}.
  $$
  Summation over all flags finishes the~argument because the~opposite facets triangulate $\partial P$ and the~ball sectors partition $\B^n$.
\end{enumerate}

This comparison uses all prefix inequalities simultaneously.  In particular, it avoids a~dimension-by-dimension analysis of spherical simplices.

\section{From flag simplices to orthoschemes}

\noindent Write $\B^n=\{x\in\R^n:|x|\leqslant1\}$ and $\kappa_n=\vol_n(\B^n)$.  An~ordered $n$-simplex $B=\conv\{b_0,\ldots,b_n\}$ is called an~\emph{orthoscheme} if its successive edge vectors
$$
  b_1-b_0,\ b_2-b_1,\ \ldots,\ b_n-b_{n-1}
$$
are pairwise orthogonal.  Equivalently, for every $k=1,\ldots,n-1$, the~direction spaces of $\conv\{b_0,\ldots,b_k\}$ and $\conv\{b_k,\ldots,b_n\}$ are orthogonal.  This is the~terminology used in \cite[Definition~2.1]{Karasev}.  We first dispose of a~harmless degeneracy that otherwise obscures some Jacobian calculations.  For a~face $G$, the~notation $\lin G$ means the~direction space $\lin(G-G)$ of $\aff G$, while $\relint G$ denotes the~interior of $G$ relative to $\aff G$.

\begin{lemma}[Generic-center reduction]\label{lem:generic}
It is enough to prove Theorem \ref{thm:main} for polytopes with the~following property: whenever $F\subset G$ are incident faces with $\dim G=\dim F+1$, the~orthogonal projections of the~origin onto $\aff G$ and $\aff F$ are distinct.
\end{lemma}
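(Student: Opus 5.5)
We reduce to the generic case by perturbing the position of the origin while slightly enlarging the polytope, and then pass to the limit. Let $P$ satisfy the hypotheses of Theorem~\ref{thm:main}. For a point $c$ near $0$, consider the translate $P-c$, whose faces are $F-c$. For each face $G$, the projection of $0$ onto $\aff(G-c)$ is $\pi_G(c)-c$, where $\pi_G$ is the orthogonal projection onto $\aff G$.

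For incident faces $F\subset G$ with $\dim G=\dim F+1$, the projections onto $\aff G$ and $\aff F$ coincide exactly when $\pi_G(c)\in\aff F$. Since $\pi_G$ is an affine surjection onto $\aff G$, the preimage of the proper affine subspace $\aff F\subset\aff G$ is a proper affine subspace of $\R^n$. There are finitely many such pairs $(F,G)$, so the set of bad centres $c$ is a finite union of proper affine subspaces and hence has measure zero. Consequently, generic $c$ arbitrarily close to $0$ satisfy the genericity property for every pair of incident faces.

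Translation may violate the distance condition \eqref{eq:face-distance}, since $\dist(c,\aff F)$ can drop below $\sqrt{k}$ by at most $|c|$. To repair this, fix $\lambda>1$ and consider $Q=\lambda P - c$. Its faces are $\lambda F - c$, with the same combinatorics and codimensions as those of $P$. The projection of the origin onto $\aff(\lambda G-c)$ is determined by $c/\lambda$ relative to $\aff G$, up to scaling and translation. Scaling does not affect whether two projections coincide, so genericity of $Q$ reduces to genericity of $c/\lambda$ for $P$. The distances satisfy
$$
  \dist(0,\aff(\lambda F-c))\geqslant \lambda\sqrt{k}-|c|\geqslant\sqrt{k}
$$
whenever $|c|\leqslant(\lambda-1)$. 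Since $\sqrt{k}\geqslant1$, this condition suffices for every $k$. Moreover, $0\in\operatorname{int}Q$ for small $c$, because $0\in\operatorname{int}P$. Hence $Q$ satisfies all the hypotheses of Theorem~\ref{thm:main} together with the genericity property.

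Assuming the theorem for generic polytopes, we obtain
$$
  \lambda^{n-1}\cH^{n-1}(\partial P)=\cH^{n-1}(\partial Q)\geqslant n2^n.
$$
Letting $\lambda\downarrow1$ yields \eqref{eq:main} for $P$. The only step requiring care is checking that the bad set is a proper affine subspace. This holds because $\pi_G$ maps $\R^n$ onto $\aff G$ and $\aff F$ has codimension one in $\aff G$, so the preimage has codimension one in $\R^n$. The remaining steps are routine.
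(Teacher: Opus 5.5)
Your proof is correct and follows the paper's argument. The bad centres form the finitely many hyperplanes $\aff F+(\lin G)^\perp$, and the paper's rescaling $(P-v)/(1-\delta)$ is exactly your $\lambda P-c$ with $\lambda=1/(1-\delta)$ and $c=\lambda v$; both arguments then let the dilation factor tend to $1$.
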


\begin{proof}
Fix a~polytope $P$ satisfying the~assumptions of Theorem \ref{thm:main}.  For an~incident pair $F\subset G$ as in the~statement, the~set of points $v$ for which the~projections of $v$ onto $\aff G$ and $\aff F$ coincide is
$$
  E_{G,F}=\aff F+(\lin G)^\perp.
$$
It is an~affine hyperplane in $\R^n$.  There are only finitely many such pairs.  Consequently, for every sufficiently small $\delta\in(0,1)$ we may choose $v\in\operatorname{int}P$ with $|v|<\delta$ outside their union.

Set
$$
  P_{v,\delta}=\frac{P-v}{1-\delta}.
$$
If $F$ has codimension $k$, then the~corresponding face $F_{v,\delta}$ of $P_{v,\delta}$ satisfies
\begin{align*}
  \dist(0,\aff F_{v,\delta})
  =\frac{\dist(v,\aff F)}{1-\delta}
  \geqslant\frac{\dist(0,\aff F)-|v|}{1-\delta}
   \geqslant\frac{\sqrt{k}-\delta}{1-\delta}
   \geqslant\sqrt{k}.
\end{align*}
Thus $P_{v,\delta}$ satisfies the~hypotheses and has the~desired genericity property.  If Theorem \ref{thm:main} has been proved under this genericity assumption, then
$$
  \frac{\cH^{n-1}(\partial P)}{(1-\delta)^{n-1}}
  =\cH^{n-1}(\partial P_{v,\delta})\geqslant n2^n.
$$
Letting $\delta\to 0^+$, proves the~assertion for $P$, which completes the proof.
\end{proof}

We now assume the~genericity in Lemma \ref{lem:generic}.  For every face $F\subseteq P$, let $a_F$ be a~point of $F$ nearest to the~origin.  In particular, $a_P=0$.  Given a~maximal flag
\begin{equation}\label{eq:flag}
  P=F_0\supset F_1\supset\cdots\supset F_n,
  \qquad \operatorname{codim}F_k=k,
\end{equation}
put $a_k=a_{F_k}$ and $A_{\mathcal F}=\conv\{a_0,a_1,\ldots,a_n\}$.
These are the~flag simplices.  This is the~flag subdivision used by Karasev \cite[Section~2]{Karasev}, following Rogers \cite{Rogers}: on each face $G$, join $a_G$ to the~already subdivided facets of $G$.  Indeed, for $x\in G\setminus\{a_G\}$, define
$$
  \tau_G(x)=\max\{t\geqslant0:a_G+t(x-a_G)\in G\}.
$$
Then $\tau_G(x)\geqslant1$, and the~exit point $a_G+\tau_G(x)(x-a_G)$ belongs to the~relative boundary of $G$, hence to a~facet of $G$. The~facet is unique away from a~lower-dimensional set.  Induction on $\dim G$ shows that the~resulting cells cover $G$ and have disjoint relative interiors up to measure zero.  If $a_G\notin\relint G$, the~cells based on facets containing $a_G$ are degenerate and may be discarded.  Applied to $P$ and to its facets, this gives
\begin{equation}\label{eq:flag-partitions}
  P=\bigcup_{\mathcal F}A_{\mathcal F},
  \qquad
  \partial P=\bigcup_{\mathcal F}
  \conv\{a_1,\ldots,a_n\}
\end{equation}
in the~measure-theoretic sense needed below.

For a~fixed flag \eqref{eq:flag}, let $b_k$ be the~orthogonal projection of $0$ onto $\aff F_k$ and set
$$
  B_{\mathcal F}=\conv\{b_0,b_1,\ldots,b_n\}.
$$
Notice that $b_0=0$.  The~next lemma makes the~first contraction step in Karasev's proof \cite[Section~2]{Karasev} fully explicit.

\begin{lemma}[Rogers--Karasev contraction]\label{lem:contraction}
Let $A_{\mathcal F}$ be nondegenerate.  Then $B_{\mathcal F}$ is a~nondegenerate orthoscheme, and the~affine map
\begin{align}
\label{eq:TFmapDefinition}
  T_{\mathcal F}:A_{\mathcal F}\rightarrow B_{\mathcal F},
  \qquad T_{\mathcal F}(a_k)=b_k,
\end{align}
satisfies $|T_{\mathcal F}x|\leqslant|x|$ for every $x\in A_{\mathcal F}$.
\end{lemma}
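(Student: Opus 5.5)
The plan is to put everything in one orthonormal frame adapted to the flag and then compare $x$ and $T_{\mathcal F}x$ coordinate by coordinate.

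\emph{Step 1: $B_{\mathcal F}$ is an orthoscheme.} Since $b_k$ is the projection of $0$ onto $\aff F_k$, we have $b_k\perp\lin F_k$. Because $\aff F_{k+1}\subset\aff F_k$, the edge $b_{k+1}-b_k$ lies in $\lin F_k$. Now take $j<k$. Then $b_{k+1}-b_k\in\lin F_k\subseteq\lin F_{j+1}\subseteq\lin F_j$. On the other hand $b_{j+1}\perp\lin F_{j+1}$ and $b_j\perp\lin F_j\supseteq\lin F_{j+1}$. Hence $b_{j+1}-b_j\perp b_{k+1}-b_k$, and the successive edges are pairwise orthogonal. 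By the genericity assumption of Lemma~\ref{lem:generic} each edge $b_{k+1}-b_k$ is nonzero. Pairwise orthogonal nonzero vectors are independent, so $B_{\mathcal F}$ is nondegenerate.

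\emph{Step 2: adapted coordinates.} Put $e_i=(b_i-b_{i-1})/|b_i-b_{i-1}|$ for $i=1,\ldots,n$. Since $b_{i-1}\perp b_i-b_{i-1}$, we get $\beta_i:=\langle b_i,e_i\rangle=|b_i-b_{i-1}|>0$.
\begin{itemize}
\item For $i\leqslant k$ the vector $e_i$ is orthogonal to $\lin F_k$. Counting dimensions, $\{e_1,\ldots,e_k\}$ is an orthonormal basis of $(\lin F_k)^\perp$.
\item For $j\geqslant i$ the point $a_j$ lies in $\aff F_i$. Since $e_1,\ldots,e_i$ span $(\lin F_i)^\perp$, the projection of $a_j$ onto this span equals that of $b_i$. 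Hence $\langle a_j,e_i\rangle=\beta_i$ whenever $j\geqslant i$.
\end{itemize}
Write $x=\sum_j\lambda_ja_j$ with $\lambda_j\geqslant0$ and $\sum\lambda_j=1$. Then
$$
  \langle T_{\mathcal F}x,e_i\rangle=\beta_i\sum_{j\geqslant i}\lambda_j,
  \qquad
  \langle x,e_i\rangle=\beta_i\sum_{j\geqslant i}\lambda_j+\sum_{j<i}\lambda_j\langle a_j,e_i\rangle.
$$
So $|T_{\mathcal F}x|\leqslant|x|$ follows coordinatewise once we know that
$$
  \langle a_j,e_i\rangle\geqslant0\qquad\text{for all } j<i.
$$
Nondegeneracy of $A_{\mathcal F}$ is only needed to make $T_{\mathcal F}$ a well-defined affine map.

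\emph{Step 3: the sign inequality (main obstacle).} This is where the nearest-point property of $a_j$ and the assumption $0\in\operatorname{int}P$ must enter. My first attempt is the following.
\begin{itemize}
\item Inside $\aff F_{i-1}$, the facet $F_i$ of $F_{i-1}$ lies in the relative hyperplane $\aff F_i=\{y\in\aff F_{i-1}:\langle y,e_i\rangle=\beta_i\}$. So $F_{i-1}$ lies in one of the two half-spaces $\{\langle y,e_i\rangle\leqslant\beta_i\}$ or $\{\langle y,e_i\rangle\geqslant\beta_i\}$ of $\aff F_{i-1}$.
\item I would show that $a_{i-1}$, and more generally every $a_j$ with $j<i$, has nonnegative $e_i$-coordinate. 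For this I would use the variational inequality $\langle a_j,y-a_j\rangle\geqslant0$ for $y\in F_j$, applied to $y=a_i\in F_i\subset F_j$. Together with the identities $\langle a_j,e_l\rangle=\langle a_i,e_l\rangle=\beta_l$ for $l\leqslant j$, this reduces the claim to an inequality involving only the coordinates $e_{j+1},\ldots,e_n$.
\item I would then argue by downward induction on $i$, or on $\dim F_j$, using that the sub-flag from $F_j$ to $F_n$, viewed inside $\aff F_j$ with center $b_j$, is again a flag of the same type.
\end{itemize}
If this direct route fails, the fallback is the following. It suffices to prove the weaker statement $\sum_{j<i}\lambda_j\langle a_j,e_i\rangle\geqslant-2\beta_i\sum_{j\geqslant i}\lambda_j$, which is equivalent to $|\langle x,e_i\rangle|\geqslant|\langle T_{\mathcal F}x,e_i\rangle|$. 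I would obtain it from $\langle x,e_i\rangle$ being the $e_i$-coordinate of a point of $F_{i-1}$, together with the position of the origin relative to the supporting hyperplane of $F_i$ in $\aff F_{i-1}$. I expect this sign or position analysis to be the only genuinely delicate point. Steps 1 and 2 are routine linear algebra.
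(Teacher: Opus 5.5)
Your Steps 1 and 2 are correct, and Step 1 is essentially the paper's argument. Step 3 cannot be completed, however. The sign condition $\langle a_j,e_i\rangle\geqslant0$ for $j<i$ is false, and so is the coordinatewise comparison it was meant to give.

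In $\R^3$ let $F_1=\conv\{(1,1,1),(1,2,1),(-3,0,1)\}$. It is a facet of $P=\conv\bigl(F_1\cup\{(\pm10,-10,-1),(0,10,-1)\}\bigr)$, and $0\in\operatorname{int}P$. You may dilate $P$ to meet the face-distance conditions and perturb slightly for global genericity; nothing below changes. Take $F_2=\conv\{(1,1,1),(1,2,1)\}$ and $F_3=\{(1,2,1)\}$. Then:
\begin{itemize}
\item $b_1=(0,0,1)$, $b_2=(1,0,1)$, $b_3=a_3=(1,2,1)$ and $a_2=(1,1,1)$;
\item your frame is $e_1=(0,0,1)$, $e_2=(1,0,0)$, $e_3=(0,1,0)$;
\item the point of $F_1$ nearest to $0$ is the foot of the perpendicular on the edge through $(1,1,1)$ and $(-3,0,1)$, namely $a_1=(-\tfrac3{17},\tfrac{12}{17},1)$.
\end{itemize}
The flag simplex is nondegenerate, yet $\langle a_1,e_2\rangle=-\tfrac3{17}<0$. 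For $x=\tfrac12(a_1+a_2)$ you get $\langle x,e_2\rangle=\tfrac7{17}<\tfrac12=\langle T_{\mathcal F}x,e_2\rangle$. So the $e_2$-coordinate grows under $T_{\mathcal F}$. The inequality $|T_{\mathcal F}x|\leqslant|x|$ survives only because the $e_3$-coordinate drops from $\tfrac{29}{34}$ to $0$.

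Your fallback does not rescue this. Write $S=\langle T_{\mathcal F}x,e_i\rangle\geqslant0$ and let $R$ be your remainder sum. Then $|S+R|\geqslant S$ is equivalent to ``$R\geqslant0$ or $R\leqslant-2S$'', not to $R\geqslant-2S$. In the example $R=-\tfrac3{34}\geqslant-1=-2S$, while the coordinate comparison fails.

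The missing idea is that the contraction is not coordinatewise in the orthoscheme frame. It must be carried out one vertex at a time, along directions that are generally not coordinate axes. The paper replaces $a_k$ by $b_k$ successively, setting $x^{(k)}=\sum_{i\leqslant k}\lambda_ib_i+\sum_{i>k}\lambda_ia_i$. It then compares $x^{(k)}$ with $x^{(k-1)}$ along $u_k=a_k-b_k\in\lin F_k=\operatorname{span}(e_{k+1},\ldots,e_n)$.
\begin{itemize}
\item The origin and $b_0,\ldots,b_k$ lie in $\operatorname{span}(e_1,\ldots,e_k)\subseteq u_k^\perp$.
\item The nearest-point inequality you wrote down, rewritten with base point $b_k$, gives $\langle y,u_k\rangle\geqslant|u_k|^2$ for every $y\in F_k$, hence for $a_k,\ldots,a_n$.
\item Therefore the $u_k$-component of $x^{(k-1)}$ is nonnegative, drops by exactly $\lambda_k|u_k|$, and stays nonnegative.
\item The component in $u_k^\perp$ is unchanged, so $|x^{(k)}|\leqslant|x^{(k-1)}|$.
\end{itemize}
In the example, $u_1=(-\tfrac3{17},\tfrac{12}{17},0)$ mixes $e_2$ and $e_3$. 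This is exactly why no fixed-frame, coordinate-by-coordinate argument can succeed.
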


\begin{proof}
Since $\aff F_k\subset\aff F_{k-1}$, the~point $b_k$ is also the~orthogonal projection of $b_{k-1}$ onto $\aff F_k$.  Hence
$$
  b_k-b_{k-1}\perp\lin F_k.
$$
For $j>k$, the~vector $b_j-b_{j-1}$ belongs to $\lin F_{j-1}\subseteq\lin F_k$.  The~successive edge vectors of $B_{\mathcal F}$ are therefore pairwise orthogonal.  They are nonzero by the~genericity assumption, so $B_{\mathcal F}$ is a~nondegenerate orthoscheme.

For $x\in A_{\mathcal F}$, let $x=\sum_{i=0}^n\lambda_i a_i$ be its barycentric representation, where $\lambda_i\geqslant0$ and $\sum_i\lambda_i=1$, and define
$$
  x^{(k)}=\sum_{i=0}^k\lambda_i b_i+
          \sum_{i=k+1}^n\lambda_i a_i,
  \qquad k=0,\ldots,n.
$$
Thus $x^{(0)}=x$ and $x^{(n)}=T_{\mathcal F}x$.  We prove $|x^{(k)}|\leqslant|x^{(k-1)}|$.

At the~$k$th step put $u_k=a_k-b_k$ and, if $u_k\neq0$, consider the~hyperplane
$$
  H_k=\{y:(y-b_k)\mathbin{\cdot}u_k=0\}.
$$
Both $u_k\in\lin F_k$ and $b_k\perp\lin F_k$, so $H_k$ contains the~origin.  It also contains $b_0,\ldots,b_k$: every successive edge $b_j-b_{j-1}$ with $j\leqslant k$ is perpendicular to $\lin F_k$.

Moreover, $a_k$ is the~point of the~convex set $F_k$ nearest to $b_k$: for $y\in\aff F_k$ one has
$$
  |y|^2=|b_k|^2+|y-b_k|^2.
$$
For $y\in F_k$ and $0\leqslant t\leqslant1$, convexity gives $a_k+t(y-a_k)\in F_k$.  Since $t=0$ minimizes the~squared distance of this segment from $b_k$, its right derivative is nonnegative:
$$
  0\leqslant\left.\frac{\mathrm d}{\mathrm dt}\right|_{t\to 0^+}
  |a_k+t(y-a_k)-b_k|^2
  =2(y-a_k)\mathbin{\cdot}u_k.
$$
Consequently,
$$
  (y-a_k)\mathbin{\cdot}u_k\geqslant0,
  \qquad
  (y-b_k)\mathbin{\cdot}u_k\geqslant|u_k|^2.
$$
In particular, $F_k$ lies on the~$a_k$-side of $H_k$.  Since $a_i\in F_i\subseteq F_k$ for $i\geqslant k$, both $x^{(k-1)}$ and $x^{(k)}$ lie in that closed halfspace.  Their difference is
$$
  x^{(k)}-x^{(k-1)}=-\lambda_k u_k.
$$
Thus the~component parallel to $H_k$ is unchanged, while the~nonnegative normal component decreases by $\lambda_k|u_k|$ and cannot change sign.  As $0\in H_k$, this proves $|x^{(k)}|\leqslant|x^{(k-1)}|$.  If $u_k=0$, the~two points are equal.  Iteration over $k=1,\ldots,n$ proves the~claim without any assumption on the~intermediate simplices.
\end{proof}

The~contraction gives precisely the~local reduction needed for surface area.

\begin{proposition}\label{prop:local-reduction}
For every nondegenerate flag simplex $A=A_{\mathcal F}$ and its orthoscheme $B=B_{\mathcal F}$,
\begin{equation}\label{eq:local-reduction}
  \frac{\cH^{n-1}(\conv\{a_1,\ldots,a_n\})}
       {\vol_n(A\cap\B^n)}
  \geqslant
  \frac{\cH^{n-1}(\conv\{b_1,\ldots,b_n\})}
       {\vol_n(B\cap\B^n)}.
\end{equation}
\end{proposition}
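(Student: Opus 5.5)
Via the affine map $T=T_{\mathcal F}$ of Lemma~\ref{lem:contraction}, the proof reduces to two facts: $T$ multiplies volumes by a constant, and the two facets have the same altitude over the origin. Since $T$ is an affine bijection between nondegenerate simplices, it scales $n$-volume by $c=\vol_n(B)/\vol_n(A)$. Lemma~\ref{lem:contraction} gives $|Tx|\leqslant|x|$, hence $T(A\cap\B^n)\subseteq B\cap\B^n$, and therefore
$$
  \vol_n(B\cap\B^n)\geqslant c\,\vol_n(A\cap\B^n)=\frac{\vol_n(B)}{\vol_n(A)}\,\vol_n(A\cap\B^n).
$$
Equivalently,
$$
  \frac{\vol_n(A)}{\vol_n(A\cap\B^n)}\geqslant\frac{\vol_n(B)}{\vol_n(B\cap\B^n)}.
$$

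Next write each simplex volume as a cone over the facet opposite the vertex $0=a_0=b_0$:
$$
  \vol_n(A)=\tfrac1n\,h_A\,\cH^{n-1}(\conv\{a_1,\ldots,a_n\}),\qquad
  \vol_n(B)=\tfrac1n\,h_B\,\cH^{n-1}(\conv\{b_1,\ldots,b_n\}),
$$
where $h_A,h_B$ are the distances from $0$ to the respective facet hyperplanes. Substituting these into the displayed inequality gives \eqref{eq:local-reduction} as soon as $h_A=h_B$.

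The one point needing care is $h_A=h_B$. The vertex $a_n=b_n$ is the single point $F_n$. Also $a_1,\ldots,a_n\in F_1$ and $b_1,\ldots,b_n\in\aff F_1$. Because $A$ is nondegenerate, $a_1,\ldots,a_n$ span the hyperplane $\aff F_1$; because $B$ is a nondegenerate orthoscheme, so do $b_1,\ldots,b_n$. Thus both facets lie in the same hyperplane $\aff F_1$, and
$$
  h_A=h_B=\dist(0,\aff F_1)=|b_1|>0,
$$
which is positive since $0\in\operatorname{int}P$. Dividing and cancelling the common factor $\tfrac1n h$ completes the argument.

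I expect no real obstacle here. The substantive work, the contraction, is already done in Lemma~\ref{lem:contraction}, so what remains is only checking that $T$ has constant Jacobian and that the two facets share the hyperplane $\aff F_1$.
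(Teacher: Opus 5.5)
Your proposal is correct and follows essentially the same route as the paper. The paper also uses the constant Jacobian of $T_{\mathcal F}$ together with $T_{\mathcal F}(A\cap\B^n)\subseteq B\cap\B^n$ to compare the relative ball volumes, and then uses the equal altitude $|b_1|$ over the common hyperplane $\aff F_1$ to convert this into \eqref{eq:local-reduction}.
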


\begin{proof}
By Lemma \ref{lem:contraction},
$$
  T_{\mathcal F}(A\cap\B^n)\subseteq B\cap\B^n.
$$
The~map $T_{\mathcal F}$ has constant Jacobian $\vol_n(B)/\vol_n(A)$ (see \eqref{eq:TFmapDefinition}), and hence
\begin{equation}\label{eq:relative-volume}
  \frac{\vol_n(A\cap\B^n)}{\vol_n(A)}
  \leqslant
  \frac{\vol_n(B\cap\B^n)}{\vol_n(B)}.
\end{equation}
The~two simplices have the~same altitude over the~displayed opposite facets: both affine hulls are $\aff F_1$, whose distance from $0$ is $|b_1|$.  Consequently,
$$
  \frac{\cH^{n-1}(\conv\{a_1,\ldots,a_n\})}{\vol_n(A)}
  =
  \frac{\cH^{n-1}(\conv\{b_1,\ldots,b_n\})}{\vol_n(B)}.
$$
Dividing this identity by \eqref{eq:relative-volume} gives \eqref{eq:local-reduction}.
\end{proof}

\section{A~Gaussian comparison for orthoschemes}

\noindent The~essential dimension-free estimate is the~following integral lemma. Ordered Gaussian integrals of this type arise in formulas for solid angles of faces of type-$B$ Weyl chambers (see, for instance,~\cite[Equation (3.5)]{GodlandKabluchko}).  The~point of the~following lemma is the~comparison under the~prefix conditions $q_1+\cdots+q_k\geq k$.

\begin{lemma}[Ordered-square comparison]\label{lem:ordered-square}
Let $q_1,\ldots,q_n>0$ and put $r_k=q_1+\cdots+q_k$ for $k=1,\ldots,n$.
Assume that $r_k\geqslant k$ for $k=1,\ldots,n$.  For
$$
  D_n=\{z\in\R^n:z_1\geqslant z_2\geqslant\cdots\geqslant z_n\geqslant0\}
$$
define
$$
  \mathcal I(q_1,\ldots,q_n)
  =\int_{D_n}\exp\!\left(-\frac12\sum_{j=1}^nq_jz_j^2\right)\dd z.
$$
Then
\begin{equation}\label{eq:ordered-square}
  \sqrt{q_1}\,\mathcal I(q_1,\ldots,q_n)\leqslant \mathcal I(1,\ldots,1).
\end{equation}
\end{lemma}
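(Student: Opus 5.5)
The plan is to make the exponent linear and then \emph{normalise the prefix sums} $r_k$ so that all hypotheses enter through one pointwise comparison of integrands.

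\textbf{Step 1: ordered squares.} On $D_n$ I will substitute $s_j=z_j^2$, so that $s_1\geqslant\cdots\geqslant s_n\geqslant0$ and $\dd z_j=\dd s_j/(2\sqrt{s_j})$. I will then pass to the successive gaps $t_i=s_i-s_{i+1}\geqslant0$, with $s_{n+1}=0$, so that $s_j=\sum_{i\geqslant j}t_i$. Abel summation gives $\sum_j q_js_j=\sum_i r_it_i$, and hence
$$
  \mathcal I(q)=2^{-n}\int_{[0,\infty)^n}
  \exp\!\Bigl(-\tfrac12\sum_{i=1}^n r_it_i\Bigr)
  \prod_{j=1}^n\Bigl(\sum_{i\geqslant j}t_i\Bigr)^{-1/2}\dd t .
$$
This is exactly where the prefix sums $r_k$ appear. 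The cube case $q=(1,\ldots,1)$ is the case $r_i=i$.

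\textbf{Step 2: rescaling to the cube exponent.} I will put $t_i=c_i\tau_i$ with $c_i=i/r_i\in(0,1]$, using the hypothesis $r_i\geqslant i$. The exponent then becomes the cube exponent $-\tfrac12\sum_i i\tau_i$, and
$$
  \sqrt{q_1}\,\mathcal I(q)=2^{-n}\int e^{-\frac12\sum i\tau_i}\,
  \sqrt{r_1}\prod_i c_i\prod_j\Bigl(\sum_{i\geqslant j}c_i\tau_i\Bigr)^{-1/2}\dd\tau .
$$
It therefore suffices to show that the weight
$$
  W_c(\tau)=\sqrt{r_1}\prod_i c_i\prod_j\Bigl(\sum_{i\geqslant j}c_i\tau_i\Bigr)^{-1/2}
$$
satisfies $\int e^{-\frac12\sum i\tau_i}\,W_c\leqslant\int e^{-\frac12\sum i\tau_i}\,W_{\mathbf 1}$. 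The natural sufficient statement is pointwise domination after a further measure-preserving rearrangement.

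The easy part of the domination is the following. Since $c_1=1/r_1$, the factor $\sqrt{r_1}\,c_1=c_1^{1/2}$ combines with the $j=1$ term. More generally, I will pair the Jacobian factor $c_i$ with the square roots $\bigl(\sum_{i\geqslant j}c_i\tau_i\bigr)^{-1/2}$ for $j\leqslant i$. I will use the homogeneity of degree $-\tfrac12$ of each factor to distribute $c_i$ as $c_i^{1/2}\cdot c_i^{1/2}$ between two consecutive factors.

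\textbf{Step 3: the main obstacle.} The difficulty is that $\sum_{i\geqslant j}c_i\tau_i$ can be much smaller than $\sum_{i\geqslant j}\tau_i$ when some $c_i$ are tiny. A naive pointwise bound therefore fails, and the comparison must be made after integration. My first attempt will be to integrate out the variables one at a time, from $\tau_n$ down to $\tau_1$. This is an induction on $n$ in which $(q_2,\ldots,q_n)$ with shifted prefix sums plays the role of the smaller instance. At each stage I will use the one-variable inequality
$$
  \int_0^\infty e^{-a\tau}\bigl(b+c\tau\bigr)^{-1/2}c\,\dd\tau
  \leqslant\text{(same with }c=1),
$$
which is monotone in $c\in(0,1]$ after substituting $c\tau=\sigma$. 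Here $a>0$ and $b\geqslant0$ are the quantities left over from the variables not yet integrated; the substitution turns the expression into $\int_0^\infty e^{-a\sigma/c}(b+\sigma)^{-1/2}\,\dd\sigma$, which increases with $c$. The delicate point is that the constraints $r_k\geqslant k$ are not inherited by the tail sequences. I will therefore try to run the induction on the equivalent $s$-form of the integral rather than the $\tau$-form, removing the largest variable $s_1$ first. Writing $\int_{s_2}^\infty e^{-q_1s_1/2}s_1^{-1/2}\dd s_1$ as a Mills-ratio expression, the factor $\sqrt{q_1}$ is absorbed into it. The excess $q_1-1\geqslant0$ is then passed on to $q_2$, which replaces $(q_1,\ldots,q_n)$ by $(q_1+q_2-1,q_3,\ldots,q_n)$ and preserves the prefix conditions. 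I expect the Mills-ratio monotonicity needed for this absorption to be the hardest verification.

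Finally, equality in every step at $r_i=i$ confirms that the bound is attained by the cube orthoscheme, consistent with the sharpness in Theorem~\ref{thm:main}.
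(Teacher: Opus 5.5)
Your workable idea appears only at the end of Step~3, and it is the paper's proof in different notation. The inequality you attribute to Mills-ratio monotonicity,
\[
  \sqrt{q_1}\int_x^\infty e^{-q_1u^2/2}\dd u\leqslant e^{-(q_1-1)x^2/2}\int_x^\infty e^{-u^2/2}\dd u \qquad (x\geqslant0),
\]
is what the paper obtains from the substitution $w_1^2-w_2^2=q_1(z_1^2-z_2^2)$ via the Jacobian bound $w_1/(\sqrt{q_1}z_1)\leqslant1$. Your passing of the excess $q_1-1$ to $q_2$ is the paper's $\widetilde q=(1,q_1+q_2-1,q_3,\ldots,q_n)$.

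As written, however, the argument is not closed. The step you expect to be hardest takes one line: $e^{x^2/2}\int_x^\infty e^{-u^2/2}\dd u=\int_0^\infty e^{-xv-v^2/2}\dd v$ is visibly decreasing in $x$. What is actually missing is the finish. The induction you describe does not apply as stated. After $z_1$ is integrated out, the remaining $(n-1)$-fold integral carries the weight $\int_{z_2}^\infty e^{-u^2/2}\dd u$ and no factor $\sqrt{q_1+q_2-1}$. So it is not an instance of the lemma in dimension $n-1$. No induction is needed. The new coefficient vector has prefix sums $1,r_2,\ldots,r_n$. Your Abel identity from Step~1, together with $t_i\geqslant0$, then gives $\sum_i\widetilde r_it_i\geqslant\sum_i it_i$ pointwise. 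This dominates the remaining integrand by the cube integrand.

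Your Step~1 coordinates in fact give the shortest version. There $t_1$ occurs only in $e^{-r_1t_1/2}$ and in the factor $(t_1+s_2)^{-1/2}$, and
\[
  \sqrt{r_1}\int_0^\infty e^{-r_1t_1/2}(t_1+s_2)^{-1/2}\dd t_1=2e^{r_1s_2/2}\int_{\sqrt{r_1s_2}}^\infty e^{-u^2/2}\dd u .
\]
By the same Mills-ratio fact this is decreasing in $r_1$, hence at most its value at $r_1=1$. The remaining exponent satisfies $\sum_{i\geqslant2}r_it_i\geqslant\sum_{i\geqslant2}it_i$ pointwise, and the densities $\prod_{j\geqslant2}s_j^{-1/2}$ are identical on both sides.

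The rescaling $t_i=c_i\tau_i$ of Step~2 is what breaks this. It moves the discrepancy out of the exponent, where it has the right sign pointwise, and into the density $W_c$. That is why you then found pointwise domination failing and were pushed toward an induction. You should drop Step~2 and the $\tau$-induction entirely.
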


\begin{proof}
For $n=1$, a~direct rescaling gives
$\sqrt{q_1}I(q_1)=\mathcal I(1)$.  Suppose that $n\geqslant2$.
Since $q_1=r_1\geqslant1$, define a~map from $D_n$ onto itself by
$$
  w_1^2-w_2^2=q_1(z_1^2-z_2^2),
  \qquad
  w_j=z_j\quad (j=2,\ldots,n).
$$
Equivalently, $w_1=\sqrt{z_2^2+q_1(z_1^2-z_2^2)}$.
This map is a~bijection on the~interiors of the~two cones.  Its
Jacobian satisfies
$$
  \dd z=\frac{w_1}{q_1z_1}\dd w.
$$
Moreover, $w_1^2=q_1z_1^2-(q_1-1)z_2^2\leqslant q_1z_1^2$, and hence
\begin{align}
  \label{eq:w1q1z1Ineq}
  \frac{w_1}{\sqrt{q_1}z_1}\leqslant1.
\end{align}

The~quadratic form transforms as
\[
  \sum_{j=1}^nq_jz_j^2
  =
  w_1^2+(q_1+q_2-1)w_2^2+\sum_{j=3}^nq_jw_j^2.
\]
Set $\widetilde q_1=1$, $\widetilde q_2=q_1+q_2-1$, and $\widetilde q_j=q_j$ for $j\geqslant3$,
and let $\widetilde r_k=\widetilde q_1+\cdots+\widetilde q_k$.
Then
$$
  \widetilde r_1=1,
  \qquad
  \widetilde r_k=r_k\geqslant k\quad (k\geqslant2).
$$
Writing $w_{n+1}=0$ and summing by parts, we obtain
\begin{align}
  \label{eq:qjwjQuadraticIneq}
  \sum_{j=1}^n\widetilde q_jw_j^2
  =
  \sum_{k=1}^n\widetilde r_k(w_k^2-w_{k+1}^2)
  \geqslant
  \sum_{k=1}^nk(w_k^2-w_{k+1}^2)
  =
  \sum_{j=1}^nw_j^2.
\end{align}
Consequently, by \eqref{eq:w1q1z1Ineq} and \eqref{eq:qjwjQuadraticIneq}, we get
\begin{align*}
  \sqrt{q_1}\mathcal I(q_1,\ldots,q_n)
  &=
  \int_{D_n}
  \exp\!\left(-\frac12\sum_{j=1}^n\widetilde q_jw_j^2\right)
  \frac{w_1}{\sqrt{q_1}z_1}\dd w\\
  &\leqslant
  \int_{D_n}
  \exp\!\left(-\frac12\sum_{j=1}^nw_j^2\right)\dd w
  =\mathcal I(1,\ldots,1).
\end{align*}
\end{proof}

Now we apply Lemma~\ref{lem:ordered-square} to a~ball sector cut out by an~orthoscheme.

\begin{proposition}[Orthoscheme sector estimate]\label{prop:orthoscheme}
Let
$$
  B=\conv\{0,b_1,\ldots,b_n\}\subset\R^n
$$
be a~nondegenerate orthoscheme such that $|b_k|\geqslant\sqrt{k}$ for every $k$.  Then
\begin{equation}\label{eq:orthoscheme-sector}
  \frac{\cH^{n-1}(\conv\{b_1,\ldots,b_n\})}
       {\vol_n(B\cap\B^n)}
  \geqslant\frac{n2^n}{\kappa_n}.
\end{equation}
\end{proposition}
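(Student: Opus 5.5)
We will work in orthonormal coordinates adapted to the orthoscheme and reduce \eqref{eq:orthoscheme-sector} to Lemma~\ref{lem:ordered-square} with $q_k=\beta_k^2$. Let $e_k$ be the unit vector along $b_k-b_{k-1}$ (with $b_0=0$), and put $\beta_k=|b_k-b_{k-1}|>0$. These vectors are well defined by nondegeneracy, and they are pairwise orthogonal because $B$ is an orthoscheme. In the basis $e_1,\dots,e_n$ we have $b_k=(\beta_1,\dots,\beta_k,0,\dots,0)$, so the hypothesis $|b_k|\geqslant\sqrt k$ becomes the prefix condition $r_k=q_1+\cdots+q_k\geqslant k$.

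First I compute the numerator. We have $\vol_n(B)=\beta_1\cdots\beta_n/n!$. The facet $\conv\{b_1,\dots,b_n\}$ lies in the hyperplane $x_1=\beta_1$, since all $b_k$ with $k\geqslant1$ have first coordinate $\beta_1$. Hence its distance from $0$ is $\beta_1$, and
$$
  \cH^{n-1}(\conv\{b_1,\ldots,b_n\})=\frac{n\vol_n(B)}{\beta_1}=\frac{\beta_1\cdots\beta_n}{(n-1)!\,\beta_1}.
$$

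Next I compute the denominator. Since $\beta_1=|b_1|\geqslant1$, the unit ball lies in the halfspace $x_1\leqslant\beta_1$. Therefore $B\cap\B^n=C\cap\B^n$, where $C=\operatorname{pos}\{b_1,\dots,b_n\}$. By polar coordinates and rotation invariance of the standard Gaussian measure $\gamma_n$, we get $\vol_n(C\cap\B^n)=\kappa_n\gamma_n(C)$ for any cone $C$. To parametrize $C$, write $x=\sum_k\lambda_k b_k$ with $\lambda_k\geqslant0$. Then $x_j=\beta_j\sum_{k\geqslant j}\lambda_k$, so $z_j=x_j/\beta_j$ ranges exactly over $D_n$. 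The change of variables $x_j=\beta_j z_j$ then gives
$$
  \gamma_n(C)=(2\pi)^{-n/2}\,\beta_1\cdots\beta_n\,\mathcal I(\beta_1^2,\ldots,\beta_n^2).
$$

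Substituting both formulas, the left side of \eqref{eq:orthoscheme-sector} becomes
$$
  \frac{(2\pi)^{n/2}}{(n-1)!\,\kappa_n\,\beta_1\,\mathcal I(\beta_1^2,\ldots,\beta_n^2)}.
$$
Lemma~\ref{lem:ordered-square} with $q_k=\beta_k^2$ gives $\beta_1\mathcal I(q)\leqslant\mathcal I(1,\ldots,1)$, which bounds this quantity from below. It then remains to evaluate $\mathcal I(1,\ldots,1)$. By symmetry under coordinate permutations and sign changes, $\R^n$ is covered by $2^n n!$ copies of $D_n$ up to null sets. Hence $\mathcal I(1,\ldots,1)=(2\pi)^{n/2}/(2^n n!)$, and the bound becomes $2^n n!/((n-1)!\kappa_n)=n2^n/\kappa_n$, as claimed.

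The only delicate point is the identification $B\cap\B^n=C\cap\B^n$ together with the Gaussian formula for the sector volume. The first uses $\beta_1\geqslant1$, i.e.\ the case $k=1$ of the hypothesis. The second is the standard radial integration
$$
  \int_C e^{-|x|^2/2}\dd x=\frac{\vol_n(C\cap\B^n)}{\kappa_n}\,(2\pi)^{n/2}.
$$
The rest is the bookkeeping above, where the main thing to watch is that the $\beta_1$ coming from the facet distance combines with the Jacobian to give exactly the factor $\sqrt{q_1}$ in \eqref{eq:ordered-square}.
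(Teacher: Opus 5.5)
Your proposal is correct and follows essentially the same route as the paper's proof. Both use edge-adapted coordinates with the prefix conditions $r_k\geqslant k$, compute the base area from $\vol_n(B)=\beta_1\cdots\beta_n/n!$ and the height $\beta_1$, identify $B\cap\B^n$ with the cone sector via $\beta_1\geqslant1$, express the sector volume as a Gaussian solid angle through $x_j=\beta_jz_j$, and then apply Lemma~\ref{lem:ordered-square} together with the $2^nn!$ chamber count. Your explicit parametrization of the cone and the identity $B=C\cap\{x_1\leqslant\beta_1\}$ only spell out details that the paper states briefly.
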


\begin{proof}
Choose orthonormal coordinates along the~successive mutually orthogonal edges of $B$.  With suitable choices of signs,
\begin{equation}\label{eq:orthoscheme-coordinates}
  b_k=(\beta_1,\ldots,\beta_k,0,\ldots,0),
  \qquad \beta_j>0.
\end{equation}
Set $q_j=\beta_j^2$.  The~assumptions are precisely
\begin{equation}\label{eq:prefix-beta}
  r_k:=q_1+\cdots+q_k=|b_k|^2\geqslant k.
\end{equation}

The~cone of $B$ at the~origin is
$$
  C_{\beta}
  =\{(\beta_1z_1,\ldots,\beta_nz_n):
       z_1\geqslant\cdots\geqslant z_n\geqslant0\}.
$$
The~simplex itself is obtained by adding the~inequality $z_1\leqslant1$.  Since $\beta_1\geqslant1$, every $x\in C_{\beta}\cap\B^n$ satisfies $\beta_1z_1\leqslant|x|\leqslant1$, and therefore $z_1\leqslant1$.  Thus
\begin{equation}\label{eq:sector-equality}
  B\cap\B^n=C_{\beta}\cap\B^n.
\end{equation}

The~base of $B$ lies in the~hyperplane $x_1=\beta_1$.  Since $\vol_n(B)=\beta_1\cdots\beta_n/n!$, its area is
\begin{equation}\label{eq:base-area}
  \cH^{n-1}(\conv\{b_1,\ldots,b_n\})
  =\frac{\beta_2\cdots\beta_n}{(n-1)!}.
\end{equation}

Let $\gamma_n$ denote the~standard Gaussian measure on $\R^n$.
By rotational invariance and polar coordinates, the~normalized solid
angle of every measurable cone $C\subset\R^n$ satisfies
$$
  \frac{\vol_n(C\cap\B^n)}{\kappa_n}
  =\gamma_n(C).
$$
Using $x_j=\beta_jz_j$ and the~notation of Lemma \ref{lem:ordered-square}, we find
\begin{equation}\label{eq:gaussian-angle}
  \gamma_n(C_{\beta})
  =\frac{\beta_1\cdots\beta_n}{(2\uppi)^{n/2}}
    \mathcal I(q_1,\ldots,q_n).
\end{equation}
Combining \eqref{eq:sector-equality}--\eqref{eq:gaussian-angle} yields
\begin{align}
  \frac{\cH^{n-1}(\conv\{b_1,\ldots,b_n\})}
       {\vol_n(B\cap\B^n)}
  &=\frac{(2\uppi)^{n/2}}
          {(n-1)!\,\kappa_n\,\beta_1\mathcal I(q_1,\ldots,q_n)}.\label{eq:ratio-I}
\end{align}
By Lemma \ref{lem:ordered-square} and $\beta_1=\sqrt{q_1}$, the~denominator in \eqref{eq:ratio-I} is no larger than the~corresponding denominator for $q_1=\ldots=q_n=1$.

Finally, up to their measure-zero boundaries, the~space $\R^n$ is divided into $2^nn!$ congruent regions by independently choosing the~signs and ordering the~absolute values of the~coordinates and $D_n$ is one of these regions.  Hence
$$
  \frac{\mathcal I(1,\ldots,1)}{(2\uppi)^{n/2}}=\frac1{2^nn!}.
$$
Substitution into \eqref{eq:ratio-I} gives
$$
  \frac{(2\uppi)^{n/2}}
       {(n-1)!\,\kappa_n \mathcal I(1,\ldots,1)}
  =\frac{2^nn!}{(n-1)!\,\kappa_n}
  =\frac{n2^n}{\kappa_n},
$$
which is \eqref{eq:orthoscheme-sector}.
\end{proof}

\section{Proof of the~main theorem and consequences}\label{sec:global}

\begin{proof}[Proof of Theorem \ref{thm:main}]
The~case $n=1$ is immediate, so suppose $n\geqslant2$.  By Lemma \ref{lem:generic}, we may first assume the~genericity condition stated there.  The~facet-distance part of \eqref{eq:face-distance} implies
\begin{equation}\label{eq:ball-contained}
  \B^n\subset P.
\end{equation}

For every nondegenerate flag simplex $A_{\mathcal F}$, combine Propositions \ref{prop:local-reduction} and \ref{prop:orthoscheme} to obtain
\begin{equation}\label{eq:flag-local-final}
  \cH^{n-1}(\conv\{a_1,\ldots,a_n\})
  \geqslant \frac{n2^n}{\kappa_n}
         \vol_n(A_{\mathcal F}\cap\B^n).
\end{equation}
If a~flag simplex is degenerate, its opposite facet is also degenerate: the~latter lies in $\aff F_1$, which does not contain the~origin.  Hence degenerate flags contribute zero to both measure-theoretic partitions in \eqref{eq:flag-partitions}.  Summing \eqref{eq:flag-local-final} over all maximal flags and using \eqref{eq:flag-partitions} and \eqref{eq:ball-contained}, we get
\begin{align*}
  \cH^{n-1}(\partial P)
  \geqslant\frac{n2^n}{\kappa_n}
       \sum_{\mathcal F}\vol_n(A_{\mathcal F}\cap\B^n)
  =\frac{n2^n}{\kappa_n}\vol_n(\B^n)
   =n2^n.
\end{align*}
The~generic-center reduction (Lemma \ref{lem:generic}) then gives the~result for the~original polytope.
\end{proof}

\begin{proof}[Proof of Corollary \ref{cor:cube}]
Identify $L$ isometrically with $\R^n$ and set $P=[-1,1]^N\cap L$.  This face-distance reduction is the~one used in \cite[Section~2]{Karasev}. We include the~details.  Let $F$ be a~codimension-$k$ face of $P$.  The~active coordinate constraints defining $F$ have rank $k$ when restricted to $L$.  Choose $k$ independent ones.  There are distinct indices $i_1,\ldots,i_k$ and signs $\varepsilon_j\in\{-1,1\}$ such that
$$
  F\subseteq
  \{x\in\R^N:x_{i_j}=\varepsilon_j,\ j=1,\ldots,k\}.
$$
The~affine subspace on the~right is at distance $\sqrt{k}$ from the~origin.  Since $\aff F$ is contained in it, $\dist(0,\aff F)\geqslant\sqrt{k}$.
Thus $P$, viewed intrinsically in $L$, satisfies the~hypotheses of Theorem \ref{thm:main}, which gives \eqref{eq:cube-section}.
\end{proof}

The~surface-area estimate also contains the~polyhedral volume estimate \cite[Theorem~1.1]{Karasev} from which Vaaler's theorem \cite{Vaaler} follows.

\begin{corollary}\label{cor:volume}
Under the~hypotheses of Theorem \ref{thm:main}, $\vol_n(P)\geqslant2^n$.
\end{corollary}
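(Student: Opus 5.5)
We derive the volume bound from the surface-area bound by slicing $P$ into pyramids over its facets and using that every facet lies at distance at least $1$ from the origin. This is the codimension-one case $k=1$ of \eqref{eq:face-distance}.

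Since $0\in\operatorname{int}P$, the polytope is the union, with disjoint interiors, of the cones $\conv(\{0\}\cup G)$ over its facets $G$. The cone over $G$ has height $h_G=\dist(0,\aff G)$ and base area $\cH^{n-1}(G)$. Hence
$$
  \vol_n(P)=\frac1n\sum_{G}h_G\,\cH^{n-1}(G).
$$
Every facet has codimension $1$, so \eqref{eq:face-distance} gives $h_G\geqslant1$. Therefore
$$
  \vol_n(P)\geqslant\frac1n\sum_G\cH^{n-1}(G)=\frac1n\cH^{n-1}(\partial P).
$$
Theorem \ref{thm:main} now yields $\vol_n(P)\geqslant\frac1n\cdot n2^n=2^n$.

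No step here is a real obstacle. The only points to verify are the pyramid decomposition, which is standard for polytopes with the origin in the interior, and the case $n=1$. When $n=1$, $P$ is an interval $[-a,b]$ with $a,b\geqslant1$, so its length is at least $2$. The same conclusion also follows from the formula above, if $\cH^0$ is read as counting the two endpoints.
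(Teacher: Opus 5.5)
Your proposal is correct and matches the paper's proof: the pyramid decomposition gives $n\vol_n(P)=\sum_F h_F\,\cH^{n-1}(F)$, the codimension-one condition gives $h_F\geqslant1$, and Theorem~\ref{thm:main} finishes the argument. Your separate check of the case $n=1$ is a harmless addition.
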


\begin{proof}
For a~facet $F$, let $h_F=\dist(0,\aff F)$.  Since $0\in\operatorname{int}P$, the~pyramids $\conv(\{0\}\cup F)$, indexed by the~facets of $P$, cover $P$ and have pairwise disjoint interiors.  Each has volume $h_F\cH^{n-1}(F)/n$, and therefore (see also the~general support-function formula in \cite[Chapter~5]{Schneider})
$$
  n\vol_n(P)=\sum_{F\text{ facet}}h_F\cH^{n-1}(F).
$$
Since $h_F\geqslant1$, Theorem \ref{thm:main} implies
$$
  n\vol_n(P)
  \geqslant\cH^{n-1}(\partial P)
  \geqslant n2^n.
$$
\end{proof}

\begin{remark}\label{rem:method}
The~only dimension-dependent object in the~proof is the~standard chamber $D_n$, whose Gaussian measure is fixed by the~$2^nn!$ symmetry count.  All geometric information about a~flag enters through the~prefix sums $r_k=|b_k|^2$.  This is why the~ordered-square substitution replaces the~separate circular and spherical-triangle arguments in dimensions two and three by one comparison valid for every $n$.
\end{remark}

\section{Higher skeleta and a~ridge conjecture}\label{sec:skeleta}

\noindent For $0\leqslant j\leqslant n$, let $\mathcal F_j(P)$ be the~set of $j$-dimensional faces of $P$, with $\mathcal F_n(P)=\{P\}$, and define the~total $j$-face measure by $\Phi_j(P)=\sum_{F\in\mathcal F_j(P)}\cH^j(F)$.  Thus $\Phi_n(P)=\vol_n(P)$ and $\Phi_{n-1}(P)=\cH^{n-1}(\partial P)$.  This is the~usual Hausdorff measure of the~$j$-skeleton of a~polytope. Skeletal measures and their integral-geometric behavior are studied, for example, in \cite{BurtonMeasure,BurtonSections}, while fixed-volume inequalities for sums of face measures in the~simplicial class appear in \cite{DallaTamvakis}.  For the~cube $Q_n=[-1,1]^n$, one has
\begin{equation}\label{eq:cube-skeleton}
  \Phi_{n-k}(Q_n)=2^n\binom{n}{k},
  \qquad 0\leqslant k\leqslant n.
\end{equation}
Indeed, $Q_n$ has $2^k\binom{n}{k}$ faces of codimension $k$, each of $(n-k)$-volume $2^{n-k}$.

It is tempting to read Theorem \ref{thm:main} as the~case $k=1$ of the~alleged family
\begin{equation}\label{eq:naive-skeleton}
  \Phi_{n-k}(P)\stackrel{?}{\geqslant}2^n\binom{n}{k}.
\end{equation}
The~next observation shows that this extension is false, including in the~original class of linear cube sections.

\begin{proposition}\label{prop:naive-false}
Inequality \eqref{eq:naive-skeleton} fails for polytopes satisfying all the~face-distance assumptions of Theorem \ref{thm:main}.  It also fails, in a~different dimension, for a~linear section of a~cube.
\end{proposition}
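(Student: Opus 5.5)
The plan is to disprove \eqref{eq:naive-skeleton} at its extreme case $k=n$, where $\Phi_0(P)$ is simply the number of vertices and the conjectured bound reads $\Phi_0(P)\geqslant 2^n$. For each of the two assertions I will exhibit a polytope in the relevant class with fewer than $2^n$ vertices. Counting vertices needs no metric computation, so the only real work is verifying membership in the class.

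\emph{First assertion.} I take $P=\Delta_n$, the regular $n$-simplex centred at the origin with inradius $1$, and $n\geqslant2$. Its circumradius is then $n$. I will check \eqref{eq:face-distance} for each codimension-$k$ face $F$, which is the convex hull of $m=n+1-k$ vertices.
\begin{enumerate}[(i)]
  \item By symmetry, the nearest point of $\aff F$ to the origin is the centroid of $F$.
  \item A standard computation with the vertex Gram matrix (inner products $n^2$ on the diagonal and $-n$ off it) gives the squared distance from the origin to this centroid as
  $$
    n^2\,\frac{n+1-m}{mn}=\frac{nk}{n+1-k}.
  $$
  As a check, facets ($m=n$) give $1$ and vertices ($m=1$) give $n^2$.
  \item Since $n\geqslant n+1-k$, this quantity is at least $k$.
\end{enumerate}
So all the face-distance assumptions hold. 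On the other hand, $\Phi_0(\Delta_n)=n+1<2^n$ for $n\geqslant2$. For instance, when $n=2$ this is the equilateral triangle circumscribed about the unit circle, which has $3<4$ vertices.

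\emph{Second assertion.} I take $N=4$, $n=3$, and $L=\{x\in\R^4:x_1+x_2+x_3+x_4=0\}$. Every vertex of $[-1,1]^4\cap L$ lies on an edge of the cube, so three of its coordinates are $\pm1$. The condition that the coordinates sum to zero then forces the fourth coordinate to be $\pm1$ or $\pm3$, and $\pm3$ is infeasible. Hence the vertices are exactly the six permutations of $(1,1,-1,-1)$, and the section is an octahedron. This gives $\Phi_0=6<8=2^3$, contradicting \eqref{eq:naive-skeleton} with $k=n=3$. By Corollary~\ref{cor:cube}'s reduction, this section automatically satisfies the face-distance hypotheses.

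The only step needing care is the distance formula in (ii) of the simplex example. If I want to avoid that computation, the octahedron example alone already refutes the claim within the face-distance class, and the triangle settles the case $n=2$ by inspection.
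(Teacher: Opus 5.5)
Your argument is correct, but it refutes \eqref{eq:naive-skeleton} at a different codimension than the paper does. You take the endpoint $k=n$, where $\Phi_0$ is the vertex count and the claimed bound is $2^n$ vertices. Two examples then finish the job with no metric estimate on the violating side. The first is the regular simplex, with your admissibility computation $\dist(0,\aff F)^2=nk/(n+1-k)\geqslant k$, which for $n=4$ is the paper's $4r/(5-r)$. The second is the octahedral section $[-1,1]^4\cap\{x_1+\cdots+x_4=0\}$, with $6<8$ vertices. The octahedron alone already settles both assertions in dimension~$3$.

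The paper instead attacks the edge skeleton $\Phi_1$, i.e.\ $k=n-1$. It computes $\Phi_1(S_4)=20\sqrt{10}<64$. For cube sections it uses multiplicativity of the face polynomial, $\mathcal P_{K\times M}=\mathcal P_K\mathcal P_M$, to pass to $\mathsf O^{\times3}\subset\R^9$ with $\Phi_1=2592\sqrt2<4608$. More generally this shows $\Phi_j(\mathsf O^{\times m})/\bigl(2^{3m}\binom{3m}{j}\bigr)=\mathrm O_j\bigl((3/4)^m\bigr)$ for every fixed $j$.

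Your route buys brevity and a lower-dimensional cube-section example. The paper's route buys the information that matters for what follows. The case $k=n$ is a combinatorial triviality; the paper itself uses your triangle to explain why Conjecture~\ref{conj:ridge} needs $n\geqslant3$. Failure already for the $1$-skeleton, and eventually for every fixed low-dimensional skeleton within cube sections, shows that the obstruction is not an artifact of counting vertices. That is what motivates restricting the conjecture to ridges and asking for $k_{\max}(n)$. Your count $6$ is in fact the constant term of $\mathcal P_{\mathsf O}$ that drives the paper's $(3/4)^m$ decay.
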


\begin{proof}
Let $S_4$ be a~regular $4$-simplex of inradius one, centered at the~origin, and denote its vertices by $v_0,\ldots,v_4$.  Since $\sum_i v_i=0$, the~centroid of the~facet opposite $v_i$ is $-v_i/4$. The~inradius assumption therefore gives $|v_i|=4$.  Regularity and the~same zero-sum identity then give $|v_i|^2=16$ and $v_i\mathbin{\cdot}v_j=-4$ for $i\neq j$.  A~face of codimension $r$ contains $5-r$ vertices. Hence symmetry shows that the~point of its affine hull nearest to the~origin is their centroid, and the~preceding inner products give
$$
  \dist(0,\aff F)^2=\frac{4r}{5-r}\geqslant r.
$$
Thus $S_4$ is admissible.  The~same inner products give edge length $|v_i-v_j|=2\sqrt{10}$, and hence
$$
  \Phi_1(S_4)=10\cdot2\sqrt{10}=20\sqrt{10}
  \approx63.2456<64=2^4\binom{4}{3}.
$$

For an~example which is itself a~cube section, put $L_0=\{x\in\R^4:x_1+x_2+x_3+x_4=0\}$ and $\mathsf O=[-1,1]^4\cap L_0$.  The~six vertices of $\mathsf O$ are the~permutations of $(1,1,-1,-1)$.  After an~orthogonal identification of $L_0$ with $\R^3$, the~section is $\conv\{\pm2\mathbf e_1,\pm2\mathbf e_2,\pm2\mathbf e_3\}$, where $\mathbf e_1,\mathbf e_2,\mathbf e_3$ denote the~standard orthonormal basis vectors of $\R^3$. It is therefore a~regular octahedron with $12$ edges of length $2\sqrt{2}$, eight triangular faces of area $2\sqrt{3}$, and volume $32/3$.  Define $\mathcal P_K(t)=\sum_{j=0}^{\dim K}\Phi_j(K)t^j$.  Direct face enumeration gives
\begin{equation}\label{eq:octahedron-face-polynomial}
  \mathcal P_{\mathsf O}(t)
  =6+24\sqrt{2}\,t+16\sqrt{3}\,t^2+\frac{32}{3}t^3.
\end{equation}
The~$j$-faces of a~Cartesian product $K\times M$ are precisely the~products $F\times G$, where $F\in\mathcal F_r(K)$, $G\in\mathcal F_s(M)$, and $r+s=j$ (see \cite[Proposition~2.6.13]{PinedaVillavicencio}).  Since the~affine hulls of $F$ and $G$ lie in orthogonal coordinate subspaces, Fubini's theorem gives
$$
\mathcal H^{r+s}(F\times G)
=\mathcal H^r(F)\mathcal H^s(G).
$$
Summing over all faces and collecting terms of equal degree therefore yields
$$
\mathcal P_{K\times M}(t)
=\mathcal P_K(t)\mathcal P_M(t).
$$
Consequently, $P:=\mathsf O^{\times3}=[-1,1]^{12}\cap L_0^{\times3}$ is a~$9$-dimensional linear cube section and satisfies
$$
  \Phi_1(P)=\left.\frac{\dd}{\dd t}\mathcal P_{\mathsf O}(t)^3\right|_{t=0}
  =3(24\sqrt{2})6^2=2592\sqrt{2}
  \approx3665.642<4608=2^9\binom{9}{8}.
$$
More generally, for every fixed $j$, the~constant term $6$ in \eqref{eq:octahedron-face-polynomial} yields
$$
  \frac{\Phi_j(\mathsf O^{\times m})}
       {2^{3m}\binom{3m}{j}}
  =\mathrm{O}_j\!\left(\left(\frac34\right)^m\right)
  \qquad (m\rightarrow\infty).
$$
Thus every fixed low-dimensional skeleton eventually violates the~cube value, even within linear sections.
\end{proof}

The~counterexamples above concern skeleta of large codimension.  The~first case beyond surface area, namely the~total measure of the~ridges, remains plausible.

\begin{conjecture}[Ridge-skeleton conjecture]\label{conj:ridge}
Let $n\geqslant3$, and let $P\subset\R^n$ satisfy the~hypotheses of Theorem \ref{thm:main}.  Then
\begin{equation}\label{eq:ridge-conjecture}
  \Phi_{n-2}(P)\geqslant2^n\binom{n}{2}
  =n(n-1)2^{n-1}.
\end{equation}
Equality is attained by $[-1,1]^n$.
\end{conjecture}

The~restriction $n\geqslant3$ is necessary: in dimension two, an~equilateral triangle of inradius one is admissible but has only three vertices, whereas the~right-hand side of \eqref{eq:ridge-conjecture} is four.  For $n=3$, Conjecture \ref{conj:ridge} is the~classical theorem of Besicovitch and Eggleston \cite{BesicovitchEggleston}: every convex polyhedron containing the~unit ball has total edge length at least $24$.  The~extra face-distance assumptions are essential to any higher-dimensional analogue based on the~cube constant.  For example, the~regular $24$-cell of inradius one has total $2$-face area $48\sqrt{3}<96$ \cite[p.~93]{FejesToth} -- it does not contradict Conjecture \ref{conj:ridge}, because its vertices have norm $\sqrt{2}<2$ and therefore violate the~codimension-$4$ condition. Related fixed-volume variational problems for the~same
codimension-two skeletal functional were studied by Scott
\cite{ScottCodimensionTwo}, who established the~existence and partial
regularity of a~minimizer in a~suitable generalized polyhedral class. 

Two exact classes support Conjecture \ref{conj:ridge}.

\begin{proposition}\label{prop:ridge-classes}
Conjecture \ref{conj:ridge} holds for admissible centered orthotopes.  It also holds for every centrally symmetric admissible polytope all of whose facets are at distance one from the~origin.
\end{proposition}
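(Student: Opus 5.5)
The plan is to treat the two classes separately. For orthotopes a direct computation suffices. For the second class, the idea is to apply Theorem \ref{thm:main} one dimension lower, inside each facet, and then add up the results.

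\emph{Orthotopes.} Let $P=\prod_{i=1}^n[-a_i,a_i]$ with $a_i>0$. The facet $\{x_i=a_i\}$ has codimension one, so the face-distance assumption \eqref{eq:face-distance} gives $a_i\geqslant1$ for every $i$. The codimension-two faces are indexed by a pair $i<j$ together with two signs, so there are $4\binom n2$ of them. The face with indices $i,j$ has $(n-2)$-volume
$$
  \prod_{l\neq i,j}2a_l\geqslant 2^{n-2}.
$$
Summing gives $\Phi_{n-2}(P)\geqslant 4\binom n2 2^{n-2}=2^n\binom n2$, as required. Only the codimension-one conditions are used here.

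\emph{Centrally symmetric polytopes with all facets at distance one.} Fix a facet $F$ and let $b_F$ be the orthogonal projection of $0$ onto $\aff F$, so that $|b_F|=1$.

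First I show $b_F\in\relint F$.
\begin{itemize}
  \item The facet-distance assumptions give $\B^n\subset P$, as in \eqref{eq:ball-contained}. Since $b_F\in\B^n\cap\aff F$, it follows that $b_F\in P\cap\aff F=F$.
  \item Suppose $b_F$ lay in the relative boundary of $F$. Then it would lie in some ridge $R$ with $\dist(0,\aff R)\geqslant\sqrt2>1=|b_F|$, which is impossible.
\end{itemize}

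Next I regard $F$ as an $(n-1)$-polytope in $\aff F$ with $b_F$ as origin. Let $G$ be a face of $F$ of codimension $k$ in $F$; it is a face of codimension $k+1$ in $P$. Since $b_F-0\perp\aff F\supseteq\aff G$, Pythagoras gives
$$
  \dist(b_F,\aff G)^2=\dist(0,\aff G)^2-1\geqslant(k+1)-1=k.
$$
Thus $F$ satisfies the hypotheses of Theorem \ref{thm:main} in dimension $n-1$; the condition $n-1\geqslant2$ holds since $n\geqslant3$. The theorem then gives
$$
  \cH^{n-2}(\partial_{\aff F}F)\geqslant(n-1)2^{n-1}.
$$

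Summing over all facets, each ridge lies in exactly two facets and is therefore counted twice:
$$
  2\Phi_{n-2}(P)\geqslant f\,(n-1)2^{n-1},
$$
where $f$ is the number of facets of $P$.

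It remains to show $f\geqslant2n$, and this is the only place central symmetry is used.
\begin{itemize}
  \item The facets of a centrally symmetric polytope come in pairs $\pm F$.
  \item The outer normals must positively span $\R^n$, because $P$ is bounded. Hence the normals $\pm u$ span $\R^n$, which requires at least $n$ distinct pairs.
\end{itemize}
So $f\geqslant2n$, and therefore $\Phi_{n-2}(P)\geqslant n(n-1)2^{n-1}$.

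The step I expect to need the most care is the verification that each facet, recentred at $b_F$, is admissible with $b_F$ in its relative interior. This is exactly what allows Theorem \ref{thm:main} to be used as a black box. Without central symmetry, a generic polytope only guarantees $n+1$ facets, and the same argument would fall short of the conjectured bound.
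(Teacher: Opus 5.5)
Your proof is correct and follows the paper's argument essentially step by step: direct enumeration for orthotopes, then for the second class applying Theorem~\ref{thm:main} intrinsically to each recentred facet, counting each ridge twice, and using $f\geqslant 2n$ from central symmetry. The only variation is in showing $b_F\in\relint F$. You use the codimension-two condition, $\dist(0,\aff R)\geqslant\sqrt2>|b_F|$, whereas the paper uses uniqueness of the supporting hyperplane of $\B^n$ at $p_F$; both arguments work.
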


\begin{proof}
Let $R=\prod_{i=1}^n[-a_i,a_i]$.  Admissibility of the~facets gives $a_i\geqslant1$ for every $i$, and direct enumeration gives, more generally for every $1\leqslant k\leqslant n$,
$$
  \Phi_{n-k}(R)
  =2^n\sum_{\substack{J\subseteq\{1,\ldots,n\}\\|J|=n-k}}
       \prod_{j\in J}a_j
  \geqslant2^n\binom{n}{k}.
$$

Now let $P$ be centrally symmetric and suppose that every facet $F$ has distance one from the~origin.  Let $p_F$ be the~orthogonal projection of the~origin onto $\aff F$.  Since $\B^n\subset P$ and $\aff F$ is tangent to $\B^n$, the~point $p_F$ lies in $\relint F$: otherwise a~second facet through $p_F$ would give a~distinct supporting hyperplane of the~Euclidean ball at $p_F$.  If $G$ is a~face of codimension $r$ within $F$, then it has codimension $r+1$ within $P$, and orthogonal decomposition inside $\aff F$ gives
$$
  \dist_{\aff F}(p_F,\aff G)^2
  =\dist(0,\aff G)^2-\dist(0,\aff F)^2
  \geqslant(r+1)-1=r.
$$
Theorem \ref{thm:main}, applied intrinsically to the~$(n-1)$-polytope $F-p_F$, therefore gives
$$
  \cH^{n-2}(\partial F)\geqslant(n-1)2^{n-1}.
$$
A~full-dimensional centrally symmetric polytope has at least $2n$ facets: its facets occur in antipodal pairs, and their outer normals span $\R^n$.  Since every ridge belongs to exactly two facets, summing the~last inequality over all facets yields
$$
  2\Phi_{n-2}(P)
  =\sum_{F\in\mathcal F_{n-1}(P)}\cH^{n-2}(\partial F)
  \geqslant2n(n-1)2^{n-1},
$$
which is \eqref{eq:ridge-conjecture}.
\end{proof}

The~main theorem also gives a~general, although nonsharp, ridge estimate.

\begin{proposition}\label{prop:ridge-general}
Under the~hypotheses of Theorem \ref{thm:main}, for $n\geqslant3$, one has
\begin{equation}\label{eq:ridge-general}
  \Phi_{n-2}(P)
  \geqslant\frac{n-1}{2}(2\kappa_{n-1})^{1/(n-1)}
       (n2^n)^{(n-2)/(n-1)}\sim\frac{\sqrt{2\uppi\mathrm{e}}}{4}2^nn^{3/2}\approx 1.0332\cdot 2^nn^{3/2}.
\end{equation}
\end{proposition}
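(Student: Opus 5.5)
The plan is to reduce the ridge measure to facet areas, using the Euclidean isoperimetric inequality inside each facet, and then to feed in Theorem \ref{thm:main}.

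\emph{Step 1: isoperimetry in each facet.} Every ridge of $P$ lies in exactly two facets. Hence
$$
  2\Phi_{n-2}(P)=\sum_{F\in\mathcal F_{n-1}(P)}\cH^{n-2}(\partial F),
$$
as in the proof of Proposition \ref{prop:ridge-classes}. For each facet $F$, an $(n-1)$-dimensional convex body, I apply the isoperimetric inequality in $\aff F$:
$$
  \cH^{n-2}(\partial F)\geqslant(n-1)\kappa_{n-1}^{1/(n-1)}\,\cH^{n-1}(F)^{\theta},
  \qquad \theta=\frac{n-2}{n-1}.
$$
Write $a_F=\cH^{n-1}(F)$ and $S=\sum_F a_F=\cH^{n-1}(\partial P)$. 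This gives
$$
  \Phi_{n-2}(P)\geqslant\frac{n-1}{2}\kappa_{n-1}^{1/(n-1)}\sum_F a_F^{\theta}.
$$

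\emph{Step 2: lower bound for $\sum_F a_F^\theta$ (the main obstacle).} The naive subadditivity bound $\sum a_F^\theta\geqslant S^\theta$ loses the factor $2^{1/(n-1)}$, so I use a bound on the largest facet instead. I claim $a_F\leqslant S/2$ for every facet $F$. To see this, orthogonally project $\partial P\setminus F$ onto $\aff F$ along the normal $u_F$. Every line parallel to $u_F$ that meets $\relint F$ meets $P$ in a segment. That segment has one endpoint in $F$, and its other endpoint lies on some other facet. So the projection of the remaining facets covers $F$, and since projection does not increase area,
$$
  a_F\leqslant S-a_F .
$$
With $\theta-1<0$ and $a_F\leqslant S/2$, we get $a_F^{\theta}=a_F\,a_F^{\theta-1}\geqslant a_F(S/2)^{\theta-1}$. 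Summing over $F$,
$$
  \sum_F a_F^\theta\geqslant 2^{1-\theta}S^{\theta}=2^{1/(n-1)}S^{\theta}.
$$

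\emph{Step 3: conclusion.} By Theorem \ref{thm:main}, $S\geqslant n2^n$. Since $\theta>0$, the bound is monotone in $S$, and therefore
$$
  \Phi_{n-2}(P)\geqslant\frac{n-1}{2}(2\kappa_{n-1})^{1/(n-1)}(n2^n)^{(n-2)/(n-1)} .
$$

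\emph{Step 4: asymptotics.} For the asymptotic form, write $(n2^n)^{(n-2)/(n-1)}=n2^n\,(n2^n)^{-1/(n-1)}$; here $(n2^n)^{-1/(n-1)}\to 1/2$. By Stirling's formula, $\kappa_{n-1}^{1/(n-1)}\sim\sqrt{2\uppi\mathrm e/n}$, and $2^{1/(n-1)}\to1$. Multiplying out,
$$
  \frac{n-1}{2}\cdot\sqrt{\frac{2\uppi\mathrm e}{n}}\cdot\frac{n2^n}{2}\sim\frac{\sqrt{2\uppi\mathrm e}}{4}\,2^n n^{3/2},
$$
which is the stated asymptotic; this part is routine.
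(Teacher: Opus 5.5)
Your proposal is correct and follows the paper's proof step for step: isoperimetry in each facet, the bound $a_F\leqslant S/2$, the estimate $\sum_F a_F^{\theta}\geqslant 2^{1/(n-1)}S^{\theta}$, ridge double counting, Theorem~\ref{thm:main}, and Stirling for the asymptotics. The only difference is how you obtain $a_F\leqslant S/2$. You use a projection-and-covering argument, whereas the paper derives the equilibrium identity $\sum_F A_Fu_F=0$ from the divergence theorem and pairs it with $u_{F_0}$; both arguments are elementary and valid.
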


\begin{proof}
Write $S=\Phi_{n-1}(P)$, let $A_F=\cH^{n-1}(F)$ for every facet $F$, and put $q=(n-2)/(n-1)$.  The~Euclidean isoperimetric inequality in dimension $n-1$ \cite[Section~7.2]{Schneider} gives
$$
  \cH^{n-2}(\partial F)
  \geqslant(n-1)\kappa_{n-1}^{1/(n-1)}A_F^q.
$$
If $u_F$ denotes the~outer unit normal of a~facet $F$, then the~
divergence theorem applied to an~arbitrary constant vector field
$X(x)=v$ gives
\begin{align}
  \label{eq:divergencethm}
  0=\int_P\mathrm{div} X\dd x
  =\int_{\partial P}\langle v,u(x)\rangle\dd\cH^{n-1}(x)
  =\left\langle v,\sum_F A_Fu_F\right\rangle.
\end{align}
Since \eqref{eq:divergencethm} holds for every $v\in\R^n$, we obtain the~equilibrium
identity
$$
  \sum_F A_Fu_F=0.
$$
For a~fixed facet $F_0$, taking the~scalar product with $u_{F_0}$
and using $\langle u_F,u_{F_0}\rangle\geqslant-1$ (the~vectors $u_F$ and $u_{F_0}$ are unit) gives
$$
  0
  =A_{F_0}
   +\sum_{F\neq F_0}A_F\langle u_F,u_{F_0}\rangle
  \geqslant A_{F_0}-\sum_{F\neq F_0}A_F
  =2A_{F_0}-S.
$$
Consequently, $A_{F_0}\leqslant S/2$, and hence $A_F\leqslant S/2$
for every facet $F$.  Since $q-1<0$, it follows that
$$
  \sum_F A_F^q
  =\sum_F A_FA_F^{q-1}
  \geqslant\left(\frac S2\right)^{q-1}\sum_F A_F
  =2^{1/(n-1)}S^q.
$$
Every ridge is counted twice among the~boundaries of the~facets.  Therefore
$$
  2\Phi_{n-2}(P)
  \geqslant(n-1)(2\kappa_{n-1})^{1/(n-1)}S^{(n-2)/(n-1)}.
$$
Theorem \ref{thm:main} gives $S\geqslant n2^n$, and the inequality in \eqref{eq:ridge-general} follows. By Stirling's formula, the~right-hand side of the inequality in  \eqref{eq:ridge-general} is asymptotic to $\frac{\sqrt{2\uppi\mathrm e}}{4}\,2^n n^{3/2}$ as $n\to\infty$.
\end{proof}

Finally, the~ordered-square comparison does not extend to ridges by simply inserting one more orthoscheme edge length.  Such a~pointwise extension would lead to the~inequality $\sqrt{q_1q_2}\,I(q_1,q_2)\leqslant I(1,1)$.  In dimension two, however, the~substitution $x_i=\sqrt{q_i}z_i$ maps $D_2$ to a~Gaussian wedge of angle $\arctan\sqrt{q_2/q_1}$, and polar coordinates give
$$
  \sqrt{q_1q_2}\,I(q_1,q_2)
  =\arctan\sqrt{\frac{q_2}{q_1}}.
$$
The~admissible choice $(q_1,q_2)=(1,M)$ makes the~right-hand side tend to $\uppi/2$, whereas $I(1,1)=\uppi/4$.  Thus any proof of Conjecture \ref{conj:ridge} must use information lost by a~single-flag comparison.  

\begin{question}[Range of valid codimensions]
\label{question:valid-codimensions}
For which pairs $(n,k)$, with $1\leq k\leq n$, does the~inequality
$$
  \Phi_{n-k}(P)\geq 2^n\binom{n}{k}
$$
hold for every polytope $P\subset\R^n$ satisfying the~hypotheses of Theorem~\ref{thm:main}?  In particular, determine the~largest integer $k_{\max}(n)$ for which it holds simultaneously for all $1\leq k\leq k_{\max}(n)$.
\end{question}

\section*{Declarations}

\noindent\textbf{Funding.}
The~author received no financial support for the~research, authorship,
or publication of this article.

\noindent\textbf{Conflict of interest.}
The~author declares that there are no competing interests.

\noindent\textbf{Data availability.}
No data were generated or analyzed in this theoretical study.


\begin{thebibliography}{99}

\bibitem{BallProdromou}
K.~M.~Ball and M.~Prodromou,
\emph{A~sharp combinatorial version of Vaaler's theorem},
{Bull. Lond. Math. Soc.} \textbf{41} (2009), no.~5, 853--858,
\href{https://doi.org/10.1112/blms/bdp062}
{doi:10.1112/blms/bdp062}.

\bibitem{BesicovitchEggleston}
A.~S.~Besicovitch and H.~G.~Eggleston, \emph{The~total length of the~edges of a~polyhedron}, {Quart. J. Math.,} Volume \textbf{8}, Issue 1 (1957), 172--190, \href{https://doi.org/10.1093/qmath/8.1.172}{doi:10.1093/qmath/8.1.172}.

\bibitem{BurtonMeasure}
G.~R.~Burton, \emph{The~measure of the~$s$-skeleton of a~convex body}, {Mathematika} \textbf{26} (1979), no.~2, 290--301, \href{https://doi.org/10.1112/S0025579300009839}{doi:10.1112/S0025579300009839}.

\bibitem{BurtonSections}
G.~R.~Burton, \emph{Skeleta and sections of convex bodies}, {Mathematika} \textbf{27} (1980), no.~1, 97--103, \href{https://doi.org/10.1112/S0025579300009980}{doi:10.1112/S0025579300009980}.

\bibitem{DallaTamvakis}
L.~Dalla and N.~K.~Tamvakis, \emph{An~isoperimetric inequality in the~class of simplicial polytopes}, {Math. Japon.} \textbf{44} (1996), no.~3, 569--572.

\bibitem{FejesToth}
L.~Fejes T\'oth, \emph{On the~total area of the~faces of a~four-dimensional polytope}, {Canad. J. Math.} \textbf{17} (1965), 93--99, \href{https://doi.org/10.4153/CJM-1965-009-x}{doi:10.4153/CJM-1965-009-x}.

\bibitem{GodlandKabluchko}
T.~Godland and Z.~Kabluchko,
\emph{Conic intrinsic volumes of Weyl chambers},
{Modern Stochastics: Theory and Applications}
\textbf{9} (2022), no.~3, 357--375,
\href{https://doi.org/10.15559/22-VMSTA206}
{doi:10.15559/22-VMSTA206}.

\bibitem{Karasev}
R.~Karasev, \emph{Rogers's proof of Vaaler's theorem}, {Discrete Math.} \textbf{349} (2026), no.~8, 115088, \href{https://doi.org/10.1016/j.disc.2026.115088}{doi:10.1016/j.disc.2026.115088}.

\bibitem{PinedaVillavicencio}
G.~Pineda Villavicencio, \emph{Polytopes and Graphs}, Cambridge Studies in Advanced Mathematics, vol.~211, Cambridge University Press, Cambridge, 2024, \href{https://doi.org/10.1017/9781009257794}{doi:10.1017/9781009257794}.


\bibitem{Rogers}
C.~A.~Rogers, \emph{The~packing of equal spheres}, {Proc. London Math. Soc.} (3) \textbf{8} (1958), 609--620, \href{https://doi.org/10.1112/plms/s3-8.4.609}{doi:10.1112/plms/s3-8.4.609}.

\bibitem{Schneider}
R.~Schneider, \emph{Convex Bodies: The~Brunn--Minkowski Theory}, 2nd expanded ed., Encyclopedia of Mathematics and its Applications, vol.~151, Cambridge University Press, Cambridge, 2014, \href{https://doi.org/10.1017/CBO9781139003858}{doi:10.1017/CBO9781139003858}.

\bibitem{ScottCodimensionTwo}
R.~C.~Scott,
\emph{Minimizing the~mass of the~codimension two skeleton of convex,
unit volume polyhedra},
{Indiana Univ. Math. J.} \textbf{61} (2012), no.~4, 1513--1564,
\href{https://doi.org/10.1512/iumj.2012.61.4734}
{doi:10.1512/iumj.2012.61.4734}.

\bibitem{Vaaler}
J.~D.~Vaaler, \emph{A~geometric inequality with applications to linear forms}, {Pacific J. Math.} \textbf{83} (1979), no.~2, 543--553, \href{https://doi.org/10.2140/pjm.1979.83.543}{doi:10.2140/pjm.1979.83.543}.

\bibitem{Zong}
C.~Zong, \emph{The~Cube: A~Window to Convex and Discrete Geometry}, Cambridge University Press, 2006, \href{https://doi.org/10.1017/CBO9780511543173}{doi:10.1017/CBO9780511543173}.

\end{thebibliography}
\end{document}